\RequirePackage{amsmath}
\documentclass[smallcondensed,envcountsame,envcountsect]{svjour3}

\usepackage{mathptmx}
\usepackage{amssymb}

\setlength\tabcolsep{3pt}

\newcommand{\F}{\mathbb{F}}
\newcommand{\Z}{\mathbb{Z}}

\smartqed
\title{A Swan-like note for a family of binary pentanomials}
\journalname{AAECC}
\author{Giorgos Kapetanakis}
\institute{
	Giorgos Kapetanakis
	\at Faculty of Engineering and Natural Sciences, Sabanc\i{} \"{U}niversitesi. Ortha Mahalle, Tuzla 34956, \.{I}stanbul, Turkey \\\email{gnkapet@gmail.com}}

\begin{document}
\maketitle

\begin{abstract}
In this note, we employ the techniques of Swan ({\em Pacific J. Math.} {12}(3): 1099--1106, 1962) with the purpose of studying the parity of the number of the irreducible factors of the penatomial $X^n+X^{3s}+X^{2s}+X^{s}+1\in\F_2[X]$, where $s$ is even and $n>3s$. Our results imply that if $n \not\equiv \pm 1 \pmod{8}$, then the polynomial in question is reducible.
\keywords{Swan-like \and binary field \and pentanomial}
\subclass{11T06 \and 11C08}
\end{abstract}
\section{Introduction}
Let $\F_{2^n}$ be the extension of degree $n$ of $\F_2$, the binary field. This field has numerous applications in practical areas, like cryptography or coding theory. The most common way to represent such fields is to utilize a polynomial basis, in which case an irreducible polynomial of degree $n$ over $\F_2$ is required. In particular, there are obvious computational advantages in choosing low-weight polynomials, that is polynomials with as few non-zero coefficients as possible. Namely, it is advised, see \cite{ieee00}, to favor trinomials or pentanomials. It is then natural to study the irreducibility (or equivalently the lack of it) of binary trinomials and pentanomials. We refer the interested reader to \cite[Section~3.3]{lidlniederreiter97} and \cite[Chapter~3]{mullenpanario13} and the references therein.

The interest for studying pentanomials was renewed as a result of the computational advantages that certain families of pentanomials bear \cite{banegascustodiopanario18,reyhanimasoleshhasan04,rodriguezhenriquezkoc03}. More specifically, the usage of the pentanomial
\[ X^n + X^{n-s} + X^{n-2s} + X^{n-3s} + 1 \in\F_2[X] , \]
where $s < n/3$, has been proposed. In particular, several authors \cite{reyhanimasoleshhasan04,zhangparhi01} have proved the computational advantages that the usage of such polynomials (known as \emph{class 2} pentanomials), as the corresponding Mastrovito multipliers have low complexity. On the other hand, the number of irreducible polynomials within the family of class 2 pentanomials has been observed to be abnormally small, see \cite[Remark~4]{reyhanimasoleshhasan04}. In this work, we study class 2 pentanomials, when $s$ is even and we prove the following.
\begin{theorem}\label{thm:our1}
Let $f(X)=X^n+X^{n-s}+X^{n-2s}+X^{n-3s}+1\in\F_2[X]$ and $g(X) = X^n+X^{3s}+X^{2s}+X^s+1\in\F_2[X]$, with $s$ even and $n>3s$. If $n \not\equiv \pm 1 \pmod{8}$, then $f$ and $g$ are reducible.
\end{theorem}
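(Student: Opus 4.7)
The plan is to reduce to the polynomial $g$ and then split on the parity of $n$. Since $g(X) = X^n f(1/X)$ and both $f, g$ have nonzero constant term, $f$ and $g$ share the same factorization type, so it suffices to prove that $g$ is reducible. For $n$ even, since $s$ is also even every exponent in $g$ is even, hence by the Frobenius
\[
g(X) = \bigl(X^{n/2} + X^{3s/2} + X^{s} + X^{s/2} + 1\bigr)^2 \quad\text{in } \F_2[X],
\]
which is reducible. This disposes of $n \equiv 0, 2, 4, 6 \pmod 8$.

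Henceforth I assume $n$ is odd. First I would verify that $g$ is squarefree in $\F_2[X]$: computing $g'(X) \equiv X^{n-1} \pmod 2$ (using $n$ odd and $s$ even) gives $\gcd(g, g') = \gcd(g, X^{n-1}) = 1$ since $g(0)=1$. Let $G \in \Z[X]$ be the monic lift with $\{0,1\}$-coefficients and let $r$ denote the number of distinct $\F_2$-irreducible factors of $g$. Swan's theorem states that $r \equiv n \pmod 2$ iff $\text{disc}(G) \equiv 1 \pmod 8$, so for odd $n$, establishing $\text{disc}(G) \equiv 5 \pmod 8$ forces $r$ to be even (hence $r \geq 2$, i.e.\ $g$ reducible). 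Consequently, the task reduces to proving $\text{disc}(G) \equiv 5 \pmod 8$ whenever $n \equiv 3, 5 \pmod 8$.

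The core computation of $\text{disc}(G) \bmod 8$ rests on the polynomial identity
\[
X G'(X) - n G(X) = (3s-n)X^{3s} + (2s-n)X^{2s} + (s-n)X^s - n =: H(X^s),
\]
with $H(Y) := (3s-n)Y^3 + (2s-n)Y^2 + (s-n)Y - n$. Combined with $G(0) = 1$, this yields $\text{Res}(G, G') = \text{Res}(G, XG') = \text{Res}(G, H(X^s))$. I would then exploit the decomposition $H(Y) = (Y^2+1)\bigl[(3s-n)Y + (2s-n)\bigr] - 2s(Y+1)$, together with the auxiliary evaluations $\text{Res}(G, X^s+1) = \text{Res}(G, X^{2s}+1) = 1$, which follow from $G(\zeta) = \zeta^n$ whenever $\zeta^s = -1$ or $\zeta^{2s} = -1$ and the fact that the product of all such $\zeta$ equals $1$ for even $s$. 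Since $s$ is even, $-2s(\alpha^s+1)$ is divisible by $4$, so in the binomial expansion of $\prod_\alpha H(\alpha^s)$ only the constant and linear contributions of this ``error term'' survive modulo $8$. The hard part will be the evaluation of the linear correction---essentially the symmetric function $\sum_i (\alpha_i^s+1)/(\alpha_i^{2s}+1)$ of the roots of $G$---modulo $2$; handling it via partial fractions in $1/(X^{2s}+1)$ over the $2s$-th roots of $-1$ is where the residue of $n$ modulo $8$ enters and determines whether $\text{disc}(G) \equiv 1$ or $5 \pmod 8$.
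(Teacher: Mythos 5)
Your framing is correct as far as it goes, and it coincides with the paper's in its skeleton: the reciprocal reduction, the disposal of even $n$ via Frobenius, the squarefreeness check, Swan's criterion, and the passage from the discriminant to $\prod_\alpha\bigl(nG(\alpha)-\alpha G'(\alpha)\bigr)$ are all sound. (Two sign slips should be flagged: for odd $n$ one has $\mathrm{Res}(G,XG')=\mathrm{Res}(G,X)\,\mathrm{Res}(G,G')=(-1)^nG(0)\,\mathrm{Res}(G,G')=-\mathrm{Res}(G,G')$, and the factor $(-1)^{n(n-1)/2}$ relating $\mathrm{Res}(G,G')$ to $D(G)$ is never tracked, even though it is exactly what converts the computation into the dichotomy $n\equiv\pm1$ versus $n\equiv\pm3\pmod 8$.) The genuine gap is that the central computation --- the value of this product modulo $8$ --- is not carried out, and the plan for completing it points in the wrong direction. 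Your decomposition $H(Y)=(Y^2+1)\bigl[(3s-n)Y+(2s-n)\bigr]-2s(Y+1)$ does isolate an error term divisible by $4$, and your evaluations $\prod_\alpha(\alpha^s+1)=\prod_\alpha(\alpha^{2s}+1)=1$ are correct; but the resulting ``main term'' $\prod_\alpha\bigl[(3s-n)\alpha^s+(2s-n)\bigr]$ is \emph{not} disposed of by those facts. Writing $(3s-n)\alpha^s+(2s-n)=-n(\alpha^s+1)+s(3\alpha^s+2)$ and expanding the product, the terms of first and second order in $s$ survive modulo $8$ and involve $\sum_i\alpha_i^{s}$, $\sum_i\alpha_i^{2s}$, $\sum_{i<j}\alpha_i^s\alpha_j^s$, and so on --- that is, precisely the power sums and pair sums whose evaluation constitutes the actual content of the theorem. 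You would be forced into that symmetric-function computation anyway, on top of the partial-fraction analysis you defer.

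For comparison, the paper handles the hard part as follows: a generating-function identity applied to the reciprocal polynomial shows that $S_m:=\sum_i\alpha_i^m$ vanishes for $m=n-s,\,n-2s,\,n-3s$ (these exponents are odd and less than $n$, while every low-degree term of the relevant series has degree a multiple of the even number $s$); Newton's identities at $m=2n-2s$ and $m=2n-3s$ then give $S_{2n-2s}=S_{2n-6s}$, hence $T_{n-s}=T_{n-3s}$ via $T_k=(S_k^2-S_{2k})/2$, and every correction term dies modulo $8$, leaving $\prod_iH(\alpha_i)\equiv n\pmod 8$. In particular your expectation that ``the residue of $n$ modulo $8$ enters'' through the linear correction term is misplaced: all corrections vanish, and the dependence on $n\bmod 8$ comes entirely from $n^n\equiv n\pmod 8$ together with the sign $(-1)^{n(n-1)/2}$ that your sketch drops. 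Until the symmetric-function evaluation is actually performed (by your route or the paper's), what you have is a plausible plan rather than a proof.
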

In particular, our results explain the small number of irreducible polynomials within the family of class 2 pentanomials, as a large numbers of representatives of this family are \emph{a priori} reducible.

Our method is based on Swan's \cite{swan62} techniques, who studied the parity of the number of irreducible factors of binary trinomials and proved the theorem below.
\begin{theorem}[Swan]\label{thm:swan1}
Let $n > k > 0$. Assume exactly one of $n$, $k$ is odd. Then $X^n + X^k + 1\in\F_2[X]$ has an even number of factors (and hence is reducible) in the following cases.
\begin{enumerate}
\item $n$ is even, $k$ is odd, $n\neq 2k$ and $nk/ 2\equiv 0$ or $1\pmod{4}$.
\item $n$ is odd, $k$ is even, $k\nmid 2n$ and $n\equiv \pm 3 \pmod 8$.
\item $n$ is odd, $k$ is even, $k\mid 2n$ and $n\equiv \pm 1 \pmod 8$.
\end{enumerate}
In all other cases $X^n + X^k + 1$ has an odd number of factors over $\F_2$.
\end{theorem}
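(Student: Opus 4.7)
The plan is to follow Swan's original approach: relate the parity of the number of irreducible factors of $\bar F \in \F_2[X]$ to the residue of $\mathrm{disc}(F) \pmod 8$, where $F(X) = X^n + X^k + 1 \in \Z[X]$ is the canonical integer lift. Concretely I would (i) verify that $\bar F$ is squarefree, (ii) compute $\mathrm{disc}(F) \pmod 8$ via a closed formula, and (iii) apply the Stickelberger--Swan criterion: for monic $F \in \Z[X]$ of degree $n$ with $\bar F$ squarefree, the number $r$ of irreducible factors of $\bar F$ satisfies $n - r \equiv 0 \pmod 2$ iff $\mathrm{disc}(F) \equiv 1 \pmod 8$. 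Thus $r$ is even (and hence $\bar F$ is reducible) precisely when either $n$ is even with $\mathrm{disc}(F) \equiv 1 \pmod 8$, or $n$ is odd with $\mathrm{disc}(F) \not\equiv 1 \pmod 8$; otherwise $r$ is odd.

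Squarefreeness is automatic: since exactly one of $n,k$ is odd, $\bar F'(X) = X^{j-1}$ with $j$ the odd one, and $\bar F(0)=1$ forces $\gcd(\bar F,\bar F')=1$.

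For the discriminant I would derive a closed form. From $\mathrm{disc}(F) = (-1)^{n(n-1)/2}\prod_\alpha F'(\alpha)$ together with the identity $\alpha F'(\alpha) = (k-n)\alpha^k - n$ at any root $\alpha$, evaluating the resultant $\mathrm{Res}_X(F(X),(k-n)X^k - n)$ by grouping the $k$ roots of $(k-n)X^k - n$ through the $K$-th roots of unity (using $\prod_{\eta^K=1}(a\eta + b) = b^K - (-a)^K$) yields
\[
\mathrm{disc}(F) = (-1)^{n(n-1)/2 + n + nk}\bigl[k^K(k-n)^{N-K} - (-1)^K n^N\bigr]^d,
\]
where $d = \gcd(n,k)$, $N = n/d$, $K = k/d$.

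The main obstacle is the $2$-adic case analysis of this formula. In case (i) ($n$ even, $k$ odd, $n \neq 2k$), $N$ is even and $K$ odd, and the exclusion $n \neq 2k$ forces $N \geq 4$, so $n^N \equiv 0 \pmod 8$ and the bracket reduces mod $8$ to $k(k-n) \equiv 1 - kn$ (using $x^{\mathrm{odd}} \equiv x \pmod 8$ for odd $x$); combining with the outer sign gives $\mathrm{disc}(F) \equiv 1 \pmod 8$ iff $nk/2 \equiv 0,1 \pmod 4$. In cases (ii)--(iii) ($n$ odd, $k$ even), $K$ is even, with $K = 2$ equivalent to $k \mid 2n$ (case (iii)) and $K \geq 4$ to $k \nmid 2n$ (case (ii)). For $K \geq 4$, $k^K \equiv 0 \pmod 8$, so the bracket is $\equiv -n \pmod 8$ (since $n^N \equiv n \pmod 8$ for $n,N$ odd); for $K = 2$, $k \equiv 2 \pmod 4$ gives $k^2 \equiv 4 \pmod 8$, whence the bracket is $\equiv 4 - n \pmod 8$. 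Matching each expression against the outer sign $(-1)^{n(n-1)/2 + n + nk}$ across the four odd residues of $n \pmod 8$ recovers exactly the stated congruences $n \equiv \pm 3 \pmod 8$ in case (ii) and $n \equiv \pm 1 \pmod 8$ in case (iii) for $\mathrm{disc}(F) \not\equiv 1 \pmod 8$, i.e.\ $r$ even. In all complementary subcases $\mathrm{disc}(F) \equiv 1 \pmod 8$, giving $r$ odd.
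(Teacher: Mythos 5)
Your proposal is a correct reconstruction of Swan's original argument, but note that the paper does not actually prove Theorem~\ref{thm:swan1}: it imports it verbatim from \cite{swan62}, so there is no internal proof to compare against. What you write is essentially the proof in Swan's paper: squarefreeness from $\bar F'=X^{j-1}$ with $\bar F(0)=1$; the Stickelberger--Swan parity criterion (which is exactly the paper's Lemma~\ref{lemma:swan2}); the closed-form discriminant obtained from $\alpha F'(\alpha)=(k-n)\alpha^k-n$ and the resultant grouped over $K$-th roots of unity (your formula agrees with Swan's, and the sign exponent $n+nk+d(K+1)$ is even under the hypothesis that exactly one of $n,k$ is odd); and the mod~$8$ case analysis, where your identifications $N\geq 4\Leftrightarrow n\neq 2k$ and $K=2\Leftrightarrow k\mid 2n$ and the resulting congruences all check out. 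This is a genuinely different route from what the paper does for its \emph{own} main result: for the pentanomial no closed resultant formula is available, so the author instead expands $\prod_i H(\alpha_i)$ modulo $8$ in terms of the power sums $S_m$ and $T_m$ and kills the relevant terms via the generating-function identity of Lemma~\ref{lemma:ps_id} and Newton's formula. Your resultant method buys an exact formula and hence the full ``if and only if'' statement; the paper's power-sum method trades that for applicability to polynomials with more terms. Two small points to tidy up: the subcase $n=2k$ (excluded from case~1 but still covered by the final ``all other cases'' clause) needs its own one-line evaluation, since there $N=2$ and $n^N\equiv 4\pmod 8$ rather than $0$, giving $D\equiv 5\pmod 8$ and hence an odd number of factors because $n$ is even; and your closing sentence ``$\mathrm{disc}(F)\equiv 1\pmod 8$, giving $r$ odd'' should be restricted to the odd-$n$ cases, since for even $n$ the criterion runs the other way.
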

The above has been extended by several authors \cite{bluher06,fredricksenhalessweet86,halesnewhart06,hansonpanariothomson11}. In addition, several results are known for trinomials \cite{hansonpanariothomson11}, tetranomials \cite{halesnewhart06} and certain families of binary pentanomials \cite{ahmandi06,koepfkim09}. 

We conclude this note with some observations about the distribution of the binary irreducible polynomials of the form $X^n+X^{n-s}+X^{n-2s}+X^{n-3s}+1$, when $s$ is even and $n$ is small.
\section{Preliminaries}
The \emph{discriminant} of the monic polynomial $F$ over an integral domain is defined as
\[
D(F) := \prod_{1\leq i<j\leq n} (\alpha_i-\alpha_j)^2,
\]
where $\alpha_1,\ldots ,\alpha_n$ are the roots of $F$ counted with multiplicity and $\deg(F)=n$. By using standard properties of the discriminant, see \cite{halesnewhart06}, one can show the following alternative formula for $D(F)$.
\begin{lemma}\label{lemma:discr}
Let $F$ be as above, with derivative $F'$ and the additional assumption that its constant term is equal to $1$. Further, let $H(X) := nF(X) -XF'(X)$. Then
\[
D(F)
 = (-1)^{\frac{n(n-1)}{2}} \prod_{i=1}^n H(\alpha_i) .
\]
\end{lemma}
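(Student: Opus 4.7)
The plan is to reduce the claimed identity to the standard formula
\[
D(F) = (-1)^{n(n-1)/2} \prod_{i=1}^n F'(\alpha_i),
\]
which is the usual expression for the discriminant of a monic polynomial in terms of its derivative evaluated at the roots (equivalently, obtained from $\mathrm{Res}(F,F')$ and the product rule applied to $F(X)=\prod_i (X-\alpha_i)$). This is one of the ``standard properties'' the lemma refers to, so I would just quote it.

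Next, I would evaluate $H$ at a root. Since $F(\alpha_i)=0$, the definition $H(X)=nF(X)-XF'(X)$ gives immediately
\[
H(\alpha_i) = -\alpha_i\, F'(\alpha_i),
\]
so that
\[
\prod_{i=1}^n H(\alpha_i) = (-1)^n \left(\prod_{i=1}^n \alpha_i\right)\left(\prod_{i=1}^n F'(\alpha_i)\right).
\]

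The last ingredient is Vieta: because $F$ is monic, $F(X)=\prod_i(X-\alpha_i)$, hence $F(0)=(-1)^n \prod_i \alpha_i$. Under the hypothesis that the constant term equals $1$, this yields $\prod_{i=1}^n \alpha_i = (-1)^n$. Substituting, the two factors of $(-1)^n$ cancel, so
\[
\prod_{i=1}^n H(\alpha_i) = \prod_{i=1}^n F'(\alpha_i),
\]
and plugging this into the standard formula gives the claim. There is no real obstacle here; the only content is the observation that $H(\alpha_i)=-\alpha_i F'(\alpha_i)$, combined with the fact that the product of the roots is $\pm 1$ so that the auxiliary sign disappears.
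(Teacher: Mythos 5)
Your proposal is correct and follows essentially the same route as the paper: both reduce to the identity $H(\alpha_i)=-\alpha_i F'(\alpha_i)$ and use $F(0)=(-1)^n\prod_i\alpha_i=1$ to kill the sign, the only difference being that the paper re-derives the standard formula $\prod_i F'(\alpha_i)=(-1)^{n(n-1)/2}D(F)$ from $F(X)=\prod_i(X-\alpha_i)$ while you quote it.
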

\begin{proof}
Since $F(X) = \prod_{i=1}^n (X-\alpha_i)$, hence 
\[ F'(X) = \sum_{i=1}^n \left( \prod_{j\neq i} (X-\alpha_j) \right), \]
that is $F'(\alpha_i) = \prod_{j\neq i} (\alpha_i - \alpha_j)$, for every $1\leq i\leq n$. Then, we get that
\[
\prod_{i=1}^n F'(\alpha_i) = \prod_{i=1}^n \prod_{\substack{1\leq j\leq n \\ j\neq i}} (\alpha_i-\alpha_j) = (-1)^{\frac{n(n-1)}{2}} \prod_{1\leq i<j\leq n} (\alpha_i-\alpha_j)^2 = (-1)^{\frac{n(n-1)}{2}} D(F).
\]
The result follows after observing that
\[
\prod_{i=1}^n H(\alpha_i) = (-1)^n \alpha_1\cdots \alpha_n \prod_{i=1}^n F'(\alpha_i)
\]
and that $(-1)^n\alpha_1\cdots\alpha_n=1$.
\qed\end{proof}
Towards the proof of \ref{thm:swan1}, Swan's main tool was the following.
\begin{lemma}[\cite{swan62}, Corollary~3]\label{lemma:swan2}
Let $f\in\F_2[X]$ such that $f$ is square-free and let $t_f$ denote the number of irreducible factors of $f$ over $\F_2$. If $F$ is a lift of $f$ in $\Z$, then $t_f\equiv \deg(f) \pmod{2}$ if and only of $D(F)\equiv 1 \pmod{8}$.
\end{lemma}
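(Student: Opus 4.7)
The plan is to prove Swan's characterisation using $2$-adic analysis: identify $(-1)^{n-t_f}$ with the sign of Frobenius acting on the roots of $F$ in $\overline{\mathbb{Q}}_2$, and then relate this sign to whether the square root of $D(F)$ lies in $\mathbb{Q}_2$, which for odd integers is encoded by the residue modulo $8$.

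First, since $f$ is square-free over $\F_2$, Hensel's lemma applied to the coprime factorisation $f = f_1 \cdots f_{t_f}$ produces a factorisation $F = F_1 \cdots F_{t_f}$ in $\Z_2[X]$ with each $F_i$ irreducible over $\Z_2$, $\deg F_i = \deg f_i =: d_i$, and $F_i \bmod 2 = f_i$. Each field $K_i := \mathbb{Q}_2(\alpha_i)$ (for $\alpha_i$ a root of $F_i$) is unramified over $\mathbb{Q}_2$ of degree $d_i$, so the splitting field $L$ of $F$ sits inside the maximal unramified extension $\mathbb{Q}_2^{\mathrm{ur}}$. The Frobenius $\sigma \in \mathrm{Gal}(L/\mathbb{Q}_2)$ permutes the $n$ roots of $F$ as a product of $t_f$ disjoint cycles of lengths $d_1,\ldots,d_{t_f}$; hence $\mathrm{sgn}(\sigma) = \prod_i (-1)^{d_i-1} = (-1)^{n-t_f}$.

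Set $\delta := \prod_{i<j}(\alpha_i - \alpha_j) \in L$, so that $\delta^2 = D(F)$ and $\sigma(\delta) = \mathrm{sgn}(\sigma)\,\delta$. Then $\delta \in \mathbb{Q}_2$ iff $\sigma$ fixes $\delta$ iff $(-1)^{n-t_f} = 1$, i.e.\ iff $t_f \equiv n \pmod 2$. On the other hand, $\delta \in \mathbb{Q}_2$ iff $D(F)$ is a square in $\mathbb{Q}_2^*$. Square-freeness of $f$ over $\F_2$ implies that $D(F)$ is odd, so $D(F) \in \Z_2^*$; the elementary structure theorem $\Z_2^*/(\Z_2^*)^2 \cong \{1,3,5,7\}$ (with $1$ being the unique square class modulo $8$) yields $D(F) \in (\Z_2^*)^2$ iff $D(F) \equiv 1 \pmod 8$. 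Combining the two equivalences gives the lemma. The main obstacle is the local-field setup --- verifying that Hensel's factorisation of $F$ over $\Z_2$ mirrors the factorisation of $f$ over $\F_2$ with matching degrees, that each $K_i$ is unramified, and that Frobenius acts with the stated cycle structure --- after which the remaining step is a routine invocation of the structure of $\Z_2^*/(\Z_2^*)^2$.
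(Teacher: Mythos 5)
The paper states this lemma only as a citation of Swan's Corollary~3 and supplies no proof, so there is nothing internal to compare against; your argument is a correct reconstruction of Swan's original $2$-adic proof. The Hensel lifting of the coprime factorisation to unramified extensions of $\mathbb{Q}_2$, the cycle-type computation $\mathrm{sgn}(\sigma)=(-1)^{n-t_f}$ for the Frobenius acting on the roots, and the identification of squares in $\mathbb{Z}_2^{*}$ with units congruent to $1 \pmod 8$ are all sound, granted the intended reading that the lift $F$ is monic of degree $\deg(f)$, so that $D(F)$ is an odd integer because it reduces to $D(f)\neq 0$ in $\F_2$.
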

Motivated by the above, we calculate $D(F)$ modulo $8$. Following the work in \cite{halesnewhart06}, we begin with proving the lemma below.
\begin{lemma}\label{lemma:ps_id}
Let $F\in\Z[X]$ be a monic polynomial such that $F(0)=1$ with $\alpha_1,\ldots ,\alpha_n$ its roots counted with multiplicity. Then for every $X$, with absolute value small enough,
\[ X F'(X) \sum_{i=0}^\infty (-1)^{i+1} (F(X)-1)^i = \sum_{i=1}^\infty S_{-i}^{(F)} X^i , \]
where for every $j\in\Z$, $S_j^{(F)} := \sum_{i=1}^n \alpha_i^j$.
\end{lemma}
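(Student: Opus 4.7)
The plan is to recognise both sides as two different expansions of the same rational function, namely $-XF'(X)/F(X)$, valid as a power series in $X$ around $0$.

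First I would simplify the left-hand side. Since $F(0)=1$, the polynomial $F(X)-1$ has no constant term, so for $|X|$ small enough we have $|F(X)-1|<1$ and the geometric series
\[
\sum_{i=0}^{\infty}(-1)^{i+1}(F(X)-1)^{i} \;=\; -\sum_{i=0}^{\infty}\bigl(-(F(X)-1)\bigr)^{i} \;=\; -\frac{1}{1+(F(X)-1)} \;=\; -\frac{1}{F(X)}
\]
converges. Multiplying by $XF'(X)$ shows that the left-hand side of the claimed identity equals $-XF'(X)/F(X)$.

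Next I would expand $-XF'(X)/F(X)$ using the factorisation $F(X)=\prod_{i=1}^{n}(X-\alpha_{i})$, so that
\[
\frac{F'(X)}{F(X)} \;=\; \sum_{i=1}^{n}\frac{1}{X-\alpha_{i}}.
\]
Because $F(0)=1$ forces $\alpha_{i}\neq 0$, the factor $X/(X-\alpha_{i})=-(X/\alpha_{i})/(1-X/\alpha_{i})$ expands, for $|X|<\min_{i}|\alpha_{i}|$, as the geometric series
\[
\frac{X}{X-\alpha_{i}} \;=\; -\sum_{j=1}^{\infty}\alpha_{i}^{-j}X^{j}.
\]
Summing over $i$ and interchanging the (finite outer, absolutely convergent inner) sums gives
\[
\frac{XF'(X)}{F(X)} \;=\; -\sum_{j=1}^{\infty}\Bigl(\sum_{i=1}^{n}\alpha_{i}^{-j}\Bigr)X^{j} \;=\; -\sum_{j=1}^{\infty} S_{-j}^{(F)}X^{j}.
\]
Hence $-XF'(X)/F(X)=\sum_{j=1}^{\infty} S_{-j}^{(F)}X^{j}$, which matches the right-hand side and completes the proof.

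There is essentially no real obstacle: both expansions are standard and the only thing to check is that they are valid simultaneously. This is guaranteed by taking $|X|$ smaller than both $1$ (to make $|F(X)-1|<1$, since $F(X)-1$ vanishes at $0$) and $\min_{i}|\alpha_{i}|$ (which is positive because $F(0)=1$ keeps all roots away from the origin), so the geometric-series manipulations are justified.
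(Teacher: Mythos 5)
Your proof is correct and follows essentially the same route as the paper: both identify the left-hand side as $-XF'(X)/F(X)$ via the geometric series for $1/F(X)$ (using $F(0)=1$), and then expand the logarithmic derivative over the roots as a power series in $X/\alpha_i$. The only cosmetic slip is the phrase ``smaller than $1$'' for guaranteeing $|F(X)-1|<1$ — what you actually need (and what your parenthetical already supplies) is that $F(X)-1$ vanishes at $0$, so a sufficiently small radius works.
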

\begin{proof}
Notice that for every $j\in\{1,\ldots ,n\}$ we have that $\alpha_j\neq 0$. Also, notice that
\[ - \frac{XF'(X)}{F(X)} = X \cdot \sum_{j=1}^k \frac{1}{\alpha_j} \cdot \frac{1}{1-X/\alpha_j} . \]
Further, notice that for every $j$ and $X$ with small enough absolute value,
\[ \frac{1}{1-X/\alpha_j} = \sum_{i=0}^\infty \left( \frac{X}{\alpha_j} \right)^i \]
and that for $X$ with small enough absolute value,
\[ \frac{1}{F(X)} = \frac{1}{1 - (1 - F(X))} = \sum_{i=0}^\infty (1-F(X))^i = \sum_{i=0}^\infty (-1)^i (F(X)-1)^i . \]
It follows that for $X$ with small enough absolute value,
\[ XF'(X) \sum_{i=0}^\infty (-1)^{i+1} (F(X)-1)^i = \sum_{i=1}^\infty \left( \sum_{j=1}^n \frac{1}{\alpha_j^i} \right) X^i \]
and the result follows.
\qed\end{proof}

Another useful tool for computing the discriminant of polynomials is the following, see \cite[Theorem~1.75]{lidlniederreiter97}.
\begin{theorem}[Newton's formula]\label{thm:newton}
Let $F(X)=X^n + F_{n-1}X^{n-1} + \cdots + F_1X + F_0$ be a polynomial over some field with roots $\alpha_1,\ldots ,\alpha_n$, counted with multiplicity and fix some $m\in\Z$. Further, for any integer $t$, define $S_t^{(F)} = \sum_{j=1}^n \alpha_j^t$, then
\[
S_m^{(F)} + F_{n-1}S_{m-1}^{(F)} + \cdots + F_{n-l+1}S_{m-l+1}^{(F)} + \frac lm F_{n-l}S_{m-l}^{(F)} = 0 ,
\]
where $l := \min (m,n)$.
\end{theorem}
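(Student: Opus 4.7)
The plan is to derive Newton's formula via a generating-function argument on the logarithmic derivative $F'/F$, analogous in spirit to Lemma \ref{lemma:ps_id} but carried out at $X=\infty$ rather than at $X=0$, which is precisely what one needs to access the positive-index power sums $S_m^{(F)}$. I would split into the two ranges $m>n$ (so $l=n$) and $1\leq m\leq n$ (so $l=m$), since these are handled by genuinely different tools.

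The first range is handled directly from the defining equation of the roots: $F(\alpha_j)=0$ gives $\alpha_j^n=-F_{n-1}\alpha_j^{n-1}-\cdots-F_0$ for every $j$, and multiplying by $\alpha_j^{m-n}$ and summing over $j=1,\ldots,n$ yields
\[ S_m^{(F)}+F_{n-1}S_{m-1}^{(F)}+\cdots+F_0 S_{m-n}^{(F)}=0, \]
which is the $l=n$ specialisation of the stated identity. For $1\leq m\leq n$ the recurrence alone no longer suffices, and I would instead use the Laurent expansion
\[ \frac{F'(X)}{F(X)}=\sum_{j=1}^n\frac{1}{X-\alpha_j}=\sum_{k\geq 0} S_k^{(F)} X^{-k-1}, \]
valid for $|X|$ larger than every $|\alpha_j|$. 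Clearing denominators gives $XF'(X)=F(X)\sum_{k\geq 0} S_k^{(F)} X^{-k}$; the left-hand side equals $\sum_{j=0}^n j F_j X^j$, and equating the coefficient of $X^{n-m}$ on each side (the right-hand side contributing the convolution $\sum_{k=0}^{m} F_{n-m+k} S_k^{(F)}$) yields, after a short rearrangement, Newton's identity $S_m^{(F)}+F_{n-1}S_{m-1}^{(F)}+\cdots+F_{n-m+1}S_1^{(F)}+m F_{n-m}=0$ in this range.

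The only delicate point is reconciling both cases with the single unified ``last term'' $\tfrac{l}{m} F_{n-l} S_{m-l}^{(F)}$ in the statement, which requires carefully invoking the convention $S_0^{(F)}=n$ and tracking how the factor $(n-m)$ coming from the differentiation on the left is absorbed into the $k=0$ contribution of the convolution on the right. I expect this coefficient-bookkeeping, rather than any conceptual step, to be the main and essentially only obstacle; the structural content of the proof is just a comparison of coefficients in the logarithmic-derivative identity.
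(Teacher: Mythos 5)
The paper offers no proof of this theorem to compare against---it is imported, up to notation, from \cite[Theorem~1.75]{lidlniederreiter97}---so your argument must stand on its own, and in substance it does. The case $m>n$ is exactly the standard summation of $\alpha_j^{m-n}F(\alpha_j)=0$ over the roots, and for $1\le m\le n$ your coefficient extraction is correct: comparing the coefficient of $X^{n-m}$ in $XF'(X)=F(X)\sum_{k\ge 0}S_k^{(F)}X^{-k}$ gives $(n-m)F_{n-m}=S_m^{(F)}+F_{n-1}S_{m-1}^{(F)}+\cdots+F_{n-m+1}S_1^{(F)}+nF_{n-m}$ (using $F_n=1$ and $S_0^{(F)}=n$), hence $S_m^{(F)}+F_{n-1}S_{m-1}^{(F)}+\cdots+F_{n-m+1}S_1^{(F)}+mF_{n-m}=0$.

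Two caveats, the first of which is a real obstruction to finishing the way you describe. The deferred ``coefficient-bookkeeping'' cannot succeed, because the statement as printed is not equivalent to what you (correctly) derived: with $S_0^{(F)}=n$, the displayed last term $\tfrac{l}{m}F_{n-l}S_{m-l}^{(F)}$ evaluates to $nF_{n-m}$ when $l=m\le n$ and to $\tfrac{n}{m}F_0S_{m-n}^{(F)}$ when $l=n<m$, whereas the true last terms are $mF_{n-m}$ and $F_0S_{m-n}^{(F)}$ respectively. The unified coefficient should be $\tfrac{l}{n}$, not $\tfrac{l}{m}$; note that the paper's own applications of the theorem (for $m=2n-2s$ and $m=2n-3s$, both exceeding $n$, where $S_{n-2s}$ and $S_{n-3s}$ appear with coefficient $1$) use precisely the $\tfrac{l}{n}$ normalization. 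So you have proved the correct theorem and should conclude by flagging the misprint, rather than promising a reconciliation that cannot be carried out. Secondly, the theorem is stated over an arbitrary field, where ``$|X|$ larger than every $|\alpha_j|$'' is meaningless; either run the identical computation formally in the field of Laurent series in $X^{-1}$ over the splitting field (every step, including the logarithmic-derivative identity and the geometric expansion of $1/(X-\alpha_j)$, is purely formal), or note that Newton's identities are identities with integer coefficients in the elementary symmetric functions and hence transfer from $\mathbb{C}$ to any field by specialization. Either fix is one sentence, but as written the analytic language does not cover the generality the statement claims.
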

\section{Proof of the main theorem}
From now on let $s$ be an even positive integer and $n>3s$. Also, set $f(X) := X^n + X^{n-s} + X^{n-2s} + X^{n-3s} + 1\in\F_2[X]$, the typical class 2 pentanomial, and $F(X) := X^n + X^{n-s} + X^{n-2s} + X^{n-3s} + 1\in\Z[X]$. It is clear that $F$ is a lift of $f$ in $\Z[X]$. The case when $n$ is even is trivial, since then clearly $f$ is a square in $\F_2$, hence reducible. So from now on we will additionally assume that $n$ is odd.

Also, if $\alpha_1,\ldots ,\alpha_n$ are the roots of $F$ counted with multiplicity, then for every integer $m$, set
\[ S_m := S_m^{(F)} = \sum_{j=1}^n \alpha_j^m \quad\text{and}\quad T_m := \sum_{1\leq i<j\leq n} (\alpha_i\alpha_j)^m , \]
while one easily verifies that the above, as well as similar expressions, are symmetric expressions of the roots the $F$. Furthermore, it is well-known that such expressions can be written as polynomials of the elementary symmetric functions with integer coefficients. This means that, since $F$ is monic and has integer coefficients, by Vieta's formulas, the elementary symmetric functions have integer values, which implies that all symmetric expressions of the roots the $F$ with integer coefficients have integer values.
Additionally, set
\[ H(X) := nF(X) -XF'(X) = sX^{n-s}+2sX^{n-2s}+3sX^{n-3s}+n . \]

It is clear from Lemma~\ref{lemma:swan2} that we are interested in computing $D(F)$ modulo $8$.
Towards this end, we compute
\begin{multline*}
\prod_{i=1}^n H(\alpha_i) = n^n + n^{n-1}sS_{n-s} + 2n^{n-1}sS_{n-2s} + 3n^{n-1}sS_{n-3s} + \\
n^{n-2}s^2T_{n-s} + n^{n-2}s^2T_{n-3s} + 3n^{n-2}s^2(S_{n-s}S_{n-3s} - S_{2n-4s}) + 8K ,
\end{multline*}
for some $K\in\Z$, where we note that $s$ is even, hence the terms that include $4s$, $2s^2$ or $s^k$ for $k\geq 3$ are divisible by $8$, so one can sum them up as $8K$. Since $n$ is odd, we have that $n^2 \equiv 1 \pmod{8}$, hence
\begin{multline}\label{eq:prod_h1}
\prod_{i=1}^n H(\alpha_i) \equiv n + sS_{n-s} + 2sS_{n-2s} + 3sS_{n-3s} + ns^2T_{n-s} + ns^2T_{n-3s} \\ + 3ns^2(S_{n-s}S_{n-3s} - S_{2n-4s}) \pmod{8}.
\end{multline}

Next, we apply Lemma~\ref{lemma:ps_id} on the reciprocal of $F$ and, for small enough $X$, we get:
\[
(nX^n+sX^s+2sX^{2s}+3sX^{3s}) \sum_{i=0}^\infty (-1)^{i+1} (X^n+X^s+X^{2s}+X^{3s})^i = \sum_{i=1}^\infty S_iX^i .
\]
In the LHS of the above equation, we observe that the only non-zero coefficients of terms with degree smaller than $n$ have in fact a degree that is a multiple of $s$, i.e. are even since $s$ is even. This implies that all the terms of odd degree that are smaller than $n$ are zero.
In particular, $n-ks$ is odd and strictly smaller than $n$, hence in the LHS of the equation, the coefficient of $X^{n-ks}$ is zero. It follows that the same holds for the RHS, that is
\begin{equation}\label{eq:s_n-ks=0}
S_{n-ks}=0.
\end{equation}
Now, Eq.~\eqref{eq:prod_h1} yields
\begin{equation}\label{eq:prod_h2}
\prod_{i=1}^n H(\alpha_i) \equiv n + ns^2(T_{n-s} +T_{n-3s}) -3ns^2S_{2n-4s} \pmod{8}.
\end{equation}
Furthermore, since we are only interested in the value of $D(F)$ modulo $8$ and since $s$ is even, Eq.~\eqref{eq:prod_h2} implies that for our purposes it suffices to compute $T_{n-s} + T_{n-3s}$ and $S_{2n-4s}$ modulo $2$. First, we observe that
\begin{equation}\label{eq:s_2n-4s}
S_{2n-4s} \equiv \sum_{i=1}^n \alpha_i^{2n-4s} \equiv \left( \sum_{i=1}^n \alpha_i^{n-2s} \right)^2 \equiv (S_{n-2s})^2 \equiv 0 \pmod{2},
\end{equation}
from Eq.~\eqref{eq:s_n-ks=0}.
Also, by applying Theorem~\ref{thm:newton} for the polynomial in question for $m=2n-2s$ and $m=2n-3s$, we get
\[ \begin{cases}
S_{2n-2s} + S_{2n-3s} + S_{2n-4s} + S_{2n-5s} + S_{n-2s} = 0, & \text{and} \\
S_{2n-3s} + S_{2n-4s} + S_{2n-5s} + S_{2n-6s} + S_{n-3s} = 0 .
\end{cases} \]
By subtracting the above equations, and with Eq.~\eqref{eq:s_n-ks=0} in mind, we conclude that $S_{2n-2s} = S_{2n-6s}$. This combined with the identity $T_k = (S_k^2 - S_{2k})/2$ implies $T_{n-s}=T_{n-3s}$, hence $T_{n-s}+T_{n-3s} \equiv 0 \pmod{2}$. The latter, along with Eqs.~\eqref{eq:prod_h2} and \eqref{eq:s_2n-4s} yields
\[
\prod_{i=1}^n H(\alpha_i) \equiv n \pmod{8} .
\]
This, combined with Lemma.~\ref{lemma:discr} gives
\[
D(F) \equiv (-1)^{n(n-1)/2} n \equiv \begin{cases}
1 \pmod{8} , & \text{if } n \equiv \pm 1\pmod{8} , \\ 5 \pmod{8} , & \text{if } n \equiv \pm 3 \pmod{8} .
\end{cases}
\]
The combination of the above with Lemma~\ref{lemma:swan2} implies the following.
\begin{proposition}\label{propo:our1}
Set $f(X) = X^n + X^{n-s} + X^{n-2s} + X^{n-3s} \in \F_2[X]$ and let $t_f$ be the number of irreducible factors of $f$ in $\F_2[X]$. Then 
\[
t_f \equiv \begin{cases}
1 \pmod{2} , & \text{if } n\equiv \pm 1 \pmod{8} , \\ 0 \pmod{2} , & \text{otherwise}.
\end{cases}
\]
\end{proposition}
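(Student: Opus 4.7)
The plan is to apply Lemma~\ref{lemma:swan2} to the congruence $D(F)\equiv(-1)^{n(n-1)/2}n\pmod 8$ just established. The case $n$ even was already disposed of at the start of the section (then $f$ is a square in $\F_2[X]$), so only odd $n$ needs attention. For odd $n$, $\deg(f)=n$ is odd, so Lemma~\ref{lemma:swan2} reduces the parity question to a single binary alternative: $t_f$ is odd precisely when $D(F)\equiv 1\pmod 8$.

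The residue analysis is a short case check on $n\pmod 8$. For odd $n$, $(-1)^{n(n-1)/2}$ takes value $+1$ when $n\equiv 1\pmod 4$ and $-1$ when $n\equiv 3\pmod 4$. Running over the four odd residues mod $8$: $n\equiv 1$ gives $(+1)(1)\equiv 1$, $n\equiv 3$ gives $(-1)(3)\equiv 5$, $n\equiv 5$ gives $(+1)(5)\equiv 5$, and $n\equiv 7$ gives $(-1)(7)\equiv 1$. Hence $D(F)\equiv 1\pmod 8$ exactly when $n\equiv\pm 1\pmod 8$, and $D(F)\equiv 5\pmod 8$ otherwise, which is precisely the stated split of $t_f$.

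One prerequisite of Lemma~\ref{lemma:swan2} worth verifying is that $f$ is squarefree. A direct $\gcd(f,f')$ check suffices: over $\F_2$ with $n$ odd and $s$ even one computes $f'(X)=X^{n-3s-1}(X^s+1)^3$, so any common factor of $f$ and $f'$ must divide $X^s+1$; but for any root $\beta$ of $X^s+1$ we have $\beta^s=1$ and hence $f(\beta)=4\beta^n+1\equiv 1\pmod 2$, so $\gcd(f,f')=1$. With that in hand the proposition follows immediately. The genuine obstacle in this section was the $8$-local computation of $\prod_i H(\alpha_i)$ already carried out upstream (in particular the vanishing statements $S_{n-ks}=0$ and $T_{n-s}\equiv T_{n-3s}\pmod 2$); the present proposition is just the bookkeeping that packages the outcome of that computation through Swan's criterion.
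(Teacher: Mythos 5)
Your proposal is correct and follows essentially the same route as the paper: it feeds the already-computed residue $D(F)\equiv(-1)^{n(n-1)/2}n\pmod 8$ into Lemma~\ref{lemma:swan2} and performs the four-case check on $n\bmod 8$, with $n$ even dismissed separately because $f$ is then a square. The only difference is that you explicitly verify the square-freeness hypothesis of Lemma~\ref{lemma:swan2} by computing $\gcd(f,f')=1$ (a point the paper leaves implicit), and that verification is correct.
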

Theorem~\ref{thm:our1} is an immediate consequence of Proposition~\ref{propo:our1}, once we notice that $X^n+X^{3s}+X^{2s}+X^s+1\in\F_2[X]$ and $X^n+X^{n-s}+X^{n-2s}+X^{n-3s}+1\in\F_2[X]$ are reciprocal to each other, that is they share reducibility and irreducibility.
\section{Concluding remarks}
In Theorem~\ref{thm:our1}, we proved that all the irreducible polynomials of the form $X^n+X^{n-s}+X^{n-2s}+X^{n-3s}+1\in\F_2[X]$, with $s$ even, satisfy $n\equiv \pm 1\pmod{8}$. In order to obtain some insight about the distribution of irreducible polynomials of that form, a computer search was performed with the computer algebra system \textsc{SageMath}. Namely, we did an exhaustive search of all possible values for odd $n$ and even $s$, for $7\leq n< 3000$ and a total of 374,250 polynomials were checked for irreducibility. The search revealed that 804 of them are irreducible\footnote{As a confirmation of our results, we verified our results with \textsc{Magma} for $7\leq n\leq 1000$ and found identical results} and the pairs $(n,s)$ that yielded irreducible polynomials are presented in Table~\ref{table2}.

The results confirm that all 804 class 2 irreducible polynomials satisfy $n\equiv \pm 1\pmod{8}$. Out of these, roughly half, i.e. 401 out of 804, satisfy $n\equiv 1\pmod{8}$ and the other 403 satisfy $n\equiv -1\pmod{8}$.
Another interesting observation is that the irreducible polynomials seem to be also uniformly distributed among the even values of $s$ modulo $8$. In particular, 214 polynomials satisfy $s\equiv 2\pmod{8}$, 188 satisfy $s\equiv 4\pmod{8}$, 198 satisfy $s\equiv 6\pmod{8}$ and the rest 204 satisfy $s\equiv 0\pmod{8}$.

Regarding the frequency of irreducibility within the class 2 pentanomials tested, we have observe that out of the 749 values of $n$ considered (i.e. such that $7\leq n\leq 3000$ and $n \equiv \pm 1 \pmod{8}$), 408 of them yield irreducible polynomials for some $s$ and the other 341 do not. What is more interesting however, is that it seems to be more possible for a class 2 pentanomial with $n\equiv \pm 1 \pmod{8}$ and $s$ even to be irreducible than an arbitrarily chosen binary polynomial of degree $n$. More precisely, out of the 187,125 class 2 pentanomials with those specifications we tested, 804 turned out to be irreducible, hence we had a frequency $\sim 0.43\%$. In contrast, the corresponding frequency for an arbitrary binary polynomial of the same degrees is $\sim 0.13\%$, even if we exclude the obviously reducible polynomials (i.e. those with roots in $\F_2$).

An extension of Theorem~\ref{thm:our1} for $s$ odd does not hold, as a quick computer search verifies. Namely, we performed an exhaustive search in the  interval $7\leq n< 3000$, for $s$ odd. We checked 749,996 polynomials for irreducibility and identified 1707 irreducible polynomials. We had a sample roughly double the size compared to the one from our previous test and we found roughly double the number of irreducible polynomials. This suggests that if one excludes the obviously reducible class 2 pentanomials (when $n$ and $s$ are even), then the possibility of the arbitrary class 2 pentanomial to be irreducible is almost the same for both $s$ odd and even. However, after also counting Theorem~\ref{thm:our1} in, we see that one such polynomial with $s$ even and $n\equiv \pm 3 \pmod{8}$ looks more likely to be irreducible than one with $s$ odd, while it is worth mentioning that irreducibility seems to be close to uniformly distributed for different values of (odd) $s$ modulo 8, but we found zero pairs $(n,s)$ with $8\mid n$ and very few with $n\equiv 3,5 \pmod{8}$.

We conclude this note with two final remarks. First, since reducibility implies the lack of primitivity, the pentanomials described in Theorem~\ref{thm:our1} are also non-primitive. A quick computer test suggests that among the irreducible pentanomials of Table~\ref{table2}, one finds a reasonable number of primitive polynomials without any obvious pattern. Second, we note that class 2 pentanomials share many similarities with \emph{equally spaced polynomials}, that is polynomials of the form $f(X^n)\in\F_2[X]$. Such polynomials can also be used to construct low complexity multipliers \cite{wuhasan98}. It is natural to wonder about connections between the two families or special properties of pentanomials that belong in both families. 
\begin{table}
\begin{center}\tiny
\begin{tabular}{ll|ll|ll|ll|ll}
 $n$ & $s$ & $n$ & $s$ & $n$ & $s$ & $n$ & $s$ & $n$ & $s$  \\ \hline
7 & 2 &
17 & 2, 4 &
23 & 6 &
25 & 6 &
31 & 2, 6, 8 \\
47 & 14 &
49 & 4 &
55 & 8, 16 &
65 & 6 &
71 & 2, 6, 12 \\
73 & 14, 16 &
79 & 20 &
95 & 26, 28 &
97 & 2, 4 &
103 & 10, 24, 30 \\
113 & 10 &
121 & 6, 10 &
127 & 10, 40, 42 &
137 & 34 &
151 & 22, 28, 36, 40 \\
161 & 6, 20 &
167 & 2, 30, 36, 44 &
169 & 14, 28 &
175 & 2, 6 &
185 & 8, 48 \\
191 & 6, 40 &
193 & 36, 40 &
199 & 44 &
209 & 2, 54 &
215 & 38, 46, 64 \\
217 & 22, 44 &
223 & 44 &
239 & 12 &
247 & 34 &
257 & 4, 16, 64, 72 \\
265 & 14, 46 &
287 & 54, 72 &
289 & 12, 28 &
295 & 16 &
305 & 34 \\
313 & 64, 78 &
319 & 12 &
329 & 18 &
337 & 66, 94 &
343 & 46 \\
353 & 46, 60, 70, 86 &
377 & 112 &
383 & 30, 36 &
385 & 2, 8, 18 &
391 & 120 \\
407 & 112 &
415 & 34, 84 &
425 & 4, 14, 22, 78 &
431 & 40 &
433 & 124 \\
439 & 52, 98, 102, 130 &
449 & 94 &
457 & 70, 80, 132 &
463 & 56 &
481 & 46 \\
487 & 120 &
497 & 26, 72, 76 &
505 & 52, 58 &
511 & 72, 160 &
521 & 16, 56 \\
527 & 66, 96, 160 &
529 & 14, 38, 124 &
551 & 80 &
553 & 86, 148 &
559 & 70 \\
569 & 70, 164 &
575 & 86 &
577 & 184 &
593 & 36, 158 &
599 & 10, 70 \\
623 & 104, 124, 146 &
625 & 52, 164 &
631 & 108 &
641 & 12, 118, 182, 210 &
647 & 50, 104, 144, 214 \\
649 & 192, 204 &
655 & 64 &
665 & 48, 64, 116, 132, 204 &
673 & 84, 100, 138 &
679 & 22, 72 \\
689 & 112, 170 &
713 & 224 &
719 & 50, 58, 140, 154 &
721 & 90, 146 &
727 & 60, 170 \\
737 & 244 &
743 & 30, 48, 70, 168, 178 &
745 & 86, 112, 114 &
751 & 6, 188 &
761 & 28, 46 \\
767 & 56 &
769 & 40, 72 &
775 & 136, 186 &
785 & 198 &
791 & 10, 36, 180 \\
793 & 180 &
799 & 258 &
809 & 70, 192, 224 &
815 & 112 &
817 & 154, 210 \\
823 & 244 &
833 & 206, 228 &
839 & 18 &
841 & 48 &
847 & 92 \\
857 & 86, 90, 134, 212, 214, 246 &
865 & 76, 162, 288 &
871 & 126 &
881 & 26, 28 &
887 & 112, 190 \\
889 & 104, 240, 254 &
895 & 4 &
905 & 188 &
911 & 68, 126 &
913 & 158, 274 \\
919 & 12, 112, 130 &
937 & 240 &
943 & 8, 150 &
953 & 56 &
959 & 104, 188, 272 \\
961 & 6 &
967 & 12, 70 &
977 & 160 &
983 & 114 &
985 & 74 \\
991 & 266 &
1001 & 18, 118, 328 &
1007 & 32 &
1009 & 318 &
1015 & 62, 86 \\
1025 & 98, 102, 214 &
1031 & 220, 248 &
1033 & 36, 110, 166 &
1049 & 130, 252, 274 &
1055 & 8 \\
1057 & 66, 146, 242 &
1063 & 56 &
1079 & 94, 114 &
1081 & 8, 106, 116, 282 &
1087 & 80, 202, 210, 214 \\
1103 & 344, 346 &
1105 & 32, 222 &
1111 & 366 &
1121 & 306, 336, 338 &
1127 & 270 \\
1129 & 342 &
1135 & 12 &
1145 & 306 &
1151 & 30, 342 &
1153 & 204, 218, 246, 304 \\
1159 & 22 &
1169 & 38, 332 &
1177 & 42, 62, 96 &
1183 & 36 &
1193 & 340 \\
1199 & 38 &
1201 & 120 &
1207 & 232 &
1223 & 196 &
1225 & 78, 226 \\
1231 & 130, 238, 292, 322 &
1241 & 18 &
1247 & 30 &
1249 & 354 &
1255 & 222 \\
1265 & 184, 192, 266, 298, 382 &
1271 & 150, 406, 418 &
1273 & 56 &
1279 & 72 &
1289 & 68 \\
1297 & 66, 244, 320 &
1313 & 118, 308 &
1327 & 124, 316, 350 &
1337 & 34, 136 &
1343 & 116, 120 \\
1345 & 264 &
1351 & 50, 262, 370 &
1361 & 126 &
1369 & 120, 314 &
1375 & 42, 228, 246, 326 \\
1385 & 4, 242, 256 &
1391 & 28 &
1393 & 100, 114 &
1399 & 88, 180, 270, 380, 448 &
1415 & 94, 346 \\
1417 & 114 &
1423 & 76, 264, 378 &
1447 & 114, 242, 382 &
1463 & 296 &
1465 & 174 \\
1471 & 482 &
1481 & 390 &
1487 & 92 &
1489 & 84 &
1495 & 272, 418 \\
1505 & 82, 122, 398 &
1511 & 96 &
1513 & 230, 234 &
1519 & 164, 262 &
1529 & 20, 62, 214, 322 \\
1537 & 364, 460 &
1543 & 226, 366 &
1553 & 84, 306, 358 &
1559 & 324, 480 &
1561 & 186, 356 \\
1567 & 480 &
1577 & 270, 292, 412 &
1583 & 138, 280, 300 &
1591 & 152 &
1607 & 146, 176 \\
1615 & 504 &
1625 & 216, 408 &
1633 & 490 &
1639 & 310, 404 &
1649 & 220 \\
1655 & 534 &
1663 & 448 &
1673 & 30, 356 &
1679 & 14, 68, 400 &
1681 & 436 \\
1687 & 216, 432, 516, 552 &
1697 & 140, 256, 274 &
1703 & 514 &
1705 & 54, 248, 496 &
1721 & 100, 228 \\
1729 & 72 &
1735 & 294, 336 &
1745 & 234 &
1753 & 326 &
1769 & 294 \\
1775 & 162 &
1777 & 306, 520 &
1783 & 408, 448 &
1793 & 38 &
1799 & 104 \\
1807 & 146 &
1817 & 446 &
1823 & 228, 310 &
1831 & 440 &
1841 & 22 \\
1847 & 60, 256, 530 &
1849 & 600 &
1855 & 416 &
1865 & 246, 254, 448, 604 &
1873 & 370, 558 \\
1879 & 58, 284, 422 &
1889 & 464, 620 &
1903 & 310 &
1913 & 154 &
1919 & 240 \\
1921 & 156 &
1927 & 56, 592, 634 &
1937 & 310 &
1943 & 20, 176, 320, 644 &
1945 & 96, 98, 618 \\
1951 & 610 &
1961 & 332 &
1967 & 386, 506, 590 &
1969 & 182, 598 &
1975 & 378 \\
1985 & 224, 258, 558 &
1991 & 182 &
1993 & 254, 630 &
1999 & 244, 544 &
2009 & 18, 50 \\
2015 & 14, 186, 238 &
2017 & 110, 180, 476 &
2023 & 424 &
2033 & 384 &
2039 & 280, 364, 628 \\
2047 & 22, 410, 512 &
2057 & 604 &
2063 & 16, 190, 448, 570 &
2065 & 306, 656 &
2081 & 538 \\
2087 & 450, 506 &
2089 & 50, 282, 412, 580 &
2095 & 468, 496, 546 &
2111 & 440, 560 &
2113 & 184, 576 \\
2119 & 28, 144, 282 &
2129 & 178 &
2135 & 96, 696 &
2137 & 216, 602 &
2153 & 194, 496 \\
2159 & 2, 260 &
2167 & 280, 432 &
2183 & 54, 410 &
2185 & 108 &
2191 & 218, 240, 298 \\
2201 & 386, 532, 686 &
2207 & 146, 280 &
2209 & 272, 580 &
2225 & 726 &
2231 & 218, 246, 396 \\
2233 & 204, 590 &
2239 & 40, 408, 544 &
2249 & 42, 272, 360 &
2255 & 332, 496 &
2257 & 476 \\
2273 & 210, 358, 410 &
2279 & 92, 222, 490 &
2281 & 522 &
2287 & 486, 492 &
2303 & 34, 554 \\
2305 & 396, 552 &
2311 & 408, 700 &
2321 & 588 &
2327 & 44, 524, 634 &
2329 & 112, 430 \\
2335 & 594 &
2345 & 214, 772 &
2353 & 586 &
2359 & 272, 394 &
2369 & 98, 648, 764 \\
2375 & 66, 202, 724 &
2383 & 494, 778 &
2393 & 778 &
2399 & 690, 764 &
2401 & 388, 600, 736, 740, 780 \\
2407 & 450, 730, 772 &
2417 & 642 &
2423 & 34, 208, 370, 378 &
2425 & 226, 494 &
2431 & 210, 336, 696 \\
2441 & 270 &
2447 & 262, 420 &
2449 & 378 &
2455 & 714 &
2465 & 224 \\
2473 & 514 &
2479 & 24, 600 &
2495 & 232, 414, 666 &
2497 & 442 &
2513 & 804 \\
2519 & 414, 490, 558 &
2521 & 142 &
2537 & 450, 646, 812, 820 &
2543 & 80 &
2551 & 490, 816 \\
2561 & 620 &
2569 & 522 &
2585 & 228, 252, 774 &
2591 & 528 &
2593 & 454, 630, 644 \\
2599 & 612, 770 &
2609 & 164, 728 &
2615 & 318 &
2617 & 154, 270, 644 &
2623 & 318, 484, 554, 738 \\
2633 & 464 &
2639 & 48, 814 &
2641 & 402 &
2647 & 516, 530 &
2657 & 642 \\
2665 & 504 &
2671 & 418, 580 &
2681 & 218, 416, 682 &
2687 & 380, 602, 802 &
2689 & 12 \\
2695 & 684 &
2711 & 230 &
2713 & 280, 434 &
2719 & 34, 48, 140, 662, 778 &
2729 & 32, 280 \\
2737 & 156, 344, 404, 514 &
2743 & 760 &
2753 & 326 &
2759 & 430, 474 &
2761 & 36, 490, 510, 630 \\
2767 & 34, 206, 564 &
2783 & 56, 138, 734 &
2785 & 508, 812 &
2791 & 280 &
2807 & 90, 772 \\
2815 & 28, 252 &
2825 & 96, 404 &
2831 & 62, 582 &
2833 & 280, 550, 808 &
2839 & 602 \\
2849 & 38, 438 &
2855 & 278, 938 &
2857 & 252 &
2863 & 908 &
2879 & 734, 910 \\
2881 & 560 &
2887 & 554, 806 &
2897 & 546 &
2903 & 410 &
2905 & 246 \\
2911 & 308, 742 &
2921 & 160, 188, 332 &
2927 & 616 &
2945 & 504, 606, 792, 806, 868 &
2951 & 512 \\
2953 & 890 &
2959 & 228, 938 &
2969 & 192, 212 &
2975 & 64, 334, 542 &
2977 & 42, 146 \\
2983 & 110, 490, 566, 838 &
2993 & 680, 808 &
2999 & 350, 498, 554 & & &
\end{tabular}
\end{center}
\caption{Pairs $(n,s)$ with $7\leq n< 3000$ and $s$ even, such that $X^n+X^{n-s}+X^{n-2s}+X^{n-3s}+1\in\F_2[X]$ is irreducible.}\label{table2}
\end{table}
\begin{acknowledgements}
This work was initiated during the author's visit to the Federal University of Santa Catarina. 
The author is grateful to the anonymous reviewers for their valuable comments.
\end{acknowledgements}
%
%
%

%
\end{document}